\newcommand{\supp}{\operatorname{supp}}
\newtheorem{theorem}{Theorem}
\newtheorem{lemma}[theorem]{Lemma}
\theoremstyle{remark}
\newtheorem{remark}[theorem]{Remark}
\numberwithin{equation}{section}
\numberwithin{theorem}{section}
\numberwithin{table}{section}
\numberwithin{figure}{section}
\title[Solutions of the 2D radially symmetric VM with an initial focusing phase]{Solutions of the 2D radially symmetric Vlasov-Maxwell system with an initial focusing phase}
\author{Katherine Zhiyuan Zhang}
\address{Courant Institute of Mathematical Sciences, New York University}
\begin{document}

\maketitle

\begin{abstract}

We study radially symmetric solutions to the 2D Vlasov-Maxwell system and construct solutions that initially possess arbitrarily small $C^k$ norms ($k \geq 1$) for the charge densities and the electric fields, but attain arbitrarily large $L^\infty$ norms of them at some later time.

\end{abstract}


\section{Introduction}

The behavior of solutions for Vlasov models describing plasma has been an important topic that caught wide attention. An important example is the Vlasov-Poisson equation (VP) (in which the magnetic field is absent) 
\begin{equation}
\partial_t f + v \cdot \nabla_x f +  E  \cdot \nabla_v f =0 ,
\end{equation}
\begin{equation}
E =  \nabla_x \phi   ,  \quad  \Delta_x \phi   = \int_{\mathbb{R}^3} f dv . 
\end{equation}


In Ben-Artzi-Calogero-Pankavich \cite{BCP2}, \cite{BCP1} and Zhang \cite{Z1}, it is shown that there exist spherically symmetric solutions of 3D VP such that the particle density and the electric field are initially as small as desired, but become as large as desired at some later time (called "focusing solutions"). Namely, for any positive constants $\eta$, $N$, $b$, $\epsilon_0$ and integer $\beta \geq 1$, there exists a smooth, spherically symmetric solution of VP, such that 
\begin{equation}
\supp_x f(0, x, v) \subset \{  \frac{1}{2} b \leq |x| \leq \frac{3}{2} b   \} 
\end{equation}
and 
\begin{equation}  
\| \rho (0) \|_{C^\beta}  , \ \| E (0) \|_{C^\beta} \leq \eta  , 
\end{equation}
while for some $T >0$, 
\begin{equation} 
\| \rho (T) \|_{L^\infty_{|x| \leq \epsilon_0}}  , \ \| E (T) \|_{L^\infty} \geq N  . 
\end{equation}
These results were inspired by a similar study by G. Rein and L. Teagert \cite{RT1}, which proves the existence of focusing solutions for the gravitational Vlasov-Poisson system, in which the electric force provides an attractive effect instead of a repulsive effect on the particles. It is striking that in contrast to the results in \cite{BCP2}, \cite{BCP1}, \cite{Z1}, a recent result by S. Pankavich \cite{Pankavich1} in 2021 shows that any spherically symmetric solution (with initial data in $C^1 (\mathbb{R}^3 \times \mathbb{R}^3)$) must decay for $t$ sufficiently large. Namely, there exists $C>0$, such that $ \| \rho (t) \|_{L^\infty} \leq C (1+t)^{-3} \ , \ \| E(t) \|_{L^\infty} \leq C (1+t)^{-2}  $ for all $t \geq 0$. There is no contradiction between the conclusions in \cite{BCP2}, \cite{BCP1}, \cite{Z1} and \cite{Pankavich1}. 

However, for a system that involves a magnetic field, the study of focusing solutions is absent. In this paper, we consider the Vlasov-Maxwell system (VM) on the 2D plane: 
\begin{equation} \label{E: VM-V}
\partial_t f + v \cdot \nabla_x f +  (E_1+ v_2 B, E_2 - v_1 B)   \cdot \nabla_v f =0  ,
\end{equation}
\begin{equation}  \label{E: VM-M}
\begin{split}
& \partial_{x_1} E_1  + \partial_{x_2} E_2 = \rho , \\
& \partial_t E_1 = \partial_{x_2} B - j_1 , \\
& \partial_t E_2 = - \partial_{x_1} B - j_2 , \\
& \partial_t B = \partial_{x_2} E_1 - \partial_{x_1} E_2 . \\
\end{split}
\end{equation}
Here, $f (t, x, v) \geq 0$ is the density distribution of the particles. In the equation, $x \in \mathbb{R}^2$ is the particle position, $v \in \mathbb{R}^2$ is the particle momentum, and $E= (E_1, E_2)$ is the electric field, $B$ is the magnetic field. The macroscopic charge density $\rho := \int_{\mathbb{R}^2} f dv $, and the $i$-th component of the current density is $j_i :=  \int_{\mathbb{R}^2} v_i f dv $ for $j=1$, $2$. The system VM enjoys the conservation of the total mass
\begin{equation}
M (t) = \int_{\mathbb{R}^2} \int_{\mathbb{R}^2}  f (t, x, v) dv dx =  \int_{\mathbb{R}^2} \int_{\mathbb{R}^2}  f_0 ( x, v) dv dx : = M  , \  \forall t   .
\end{equation}
In this paper, we consider the VM equation \eqref{E: VM-V} -- \eqref{E: VM-M} with initial data supported on an arbitrary annulus that has small $L^\infty$ norm, and obtain radially symmetric solutions that become large and are concentrated near the origin at some later time. Specifically, we prove the following result:

\begin{theorem} \label{mainresultVM}
For any integer $\beta \geq 1$ and any positive constants $\eta$, $N$, $\epsilon_0$, there exists a smooth, radially symmetric solution of the VM equation \eqref{E: VM-V} -- \eqref{E: VM-M}, such that  
\begin{equation}
\supp_x f(0, x, v) \subset \{  \frac{1}{2}  \leq |x| \leq 1   \} 
\end{equation}
and 
\begin{equation}  
\| \rho (0) \|_{C^\beta} \leq \eta  , \ \| E (0) \|_{C^\beta} \leq \eta   , \ \| B (0) \|_{C^\beta} \leq \eta  , 
\end{equation}
while for some $T >0$, 
\begin{equation}
\| \rho (T) \|_{L^\infty_{|x| \leq \epsilon_0}}  \geq N , \quad  \| E_r (T) \|_{L^\infty_{|x| \leq \epsilon_0}} \geq N 
\end{equation}
provided that a $C^1$ solution $(f, E, B)$ exists until the time $T$. 
Here $E_r$ is the $r$-component of the electric field $E$ under the polar coordinates.
\end{theorem}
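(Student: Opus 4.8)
The plan is to exploit radial symmetry to reduce \eqref{E: VM-V}--\eqref{E: VM-M} to an essentially electrostatic problem, and then to realize the inward-focusing mechanism of the Vlasov--Poisson constructions \cite{BCP1,BCP2,Z1} while keeping the self-consistent fields a genuine small perturbation throughout. The characteristics of \eqref{E: VM-V} are $\dot X=V$, $\dot V=E(t,X)+B(t,X)V^{\perp}$ with $V^{\perp}=(V_2,-V_1)$. Seeking a configuration in which $\rho$, $j=\int vf\,dv$ and $E$ are radial, I would observe that a radial field has vanishing scalar curl, so Faraday's law $\partial_t B=\partial_{x_2}E_1-\partial_{x_1}E_2$ forces $B\equiv0$ as long as $B(0)=0$ and $j$ stays radial; radiality of $j$ is preserved because under the radial force $E_r\hat r$ a velocity parallel to $x$ remains parallel to $x$. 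On this invariant set the dynamics collapse to the two-dimensional radial Vlasov--Poisson flow $\dot r=v_r$, $\dot v_r=E_r(t,r)$ with the Gauss relation $E_r(t,r)=r^{-1}\int_0^r\rho(t,s)\,s\,ds$. Since a smooth phase-space density cannot live exactly on $\{v\parallel x\}$, I would allow a small angular-velocity spread, producing a small conserved angular momentum and hence a small azimuthal current and a small $B$; the reduction then holds up to errors controlled by that spread, and $\|B(0)\|_{C^\beta}\le\eta$ is secured by taking $B(0)=0$ with the spread small.

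Next I would prescribe the datum $f_0(x,v)=A\,\chi(|x|)\,\delta^{-2}\phi\!\left(\delta^{-1}(v+x/T)\right)$, where $\chi$ is a fixed smooth bump supported in $\tfrac12\le|x|\le1$, $\phi$ a fixed velocity profile of unit mass, $A$ a small amplitude, $\delta$ a small velocity width, and $T>0$ the nominal focusing time. Under free transport $X(t)=x_0(1-t/T)$, so the linear velocity field $-x/T$ contracts the annulus and collapses it to the origin at $t=T$, and the density inherits the Jacobian factor, $\rho_{\mathrm{free}}(t,\cdot)\sim A\,(1-t/T)^{-2}$. The key point is that integrating out $v$ removes the $\delta$- and $T$-dependence entirely: $\rho(0,x)=A\chi(|x|)$, so $\|\rho(0)\|_{C^\beta}\lesssim A$ and, via the Gauss relation, $\|E(0)\|_{C^\beta}\lesssim A$, both as small as desired once $A$ is small and independently of how tightly the solution will later focus.

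I would then fix parameters in the order $A$, then $T$, then $\delta$, and close a bootstrap. Take $A$ small enough that $\|\rho(0)\|_{C^\beta},\|E(0)\|_{C^\beta},\|B(0)\|_{C^\beta}\le\eta$ and that the target focal radius $R\sim A/N$ satisfies $R\le\epsilon_0$. At the stopping time $t_*$ with $1-t_*/T\sim R$ the mass $M\sim A$ occupies a disk of radius $\sim R$, so the Gauss relation gives $E_r(t_*,\cdot)\sim M/R\gtrsim N$ near the edge of the blob and $\rho(t_*)\sim M/R^2\sim N^2/A\gg N$, both inside $|x|\le\epsilon_0$. The heart of the argument is to show the true characteristics stay within $o(R)$ of the free ones up to $t_*$: along a focusing ray $|E_r|\sim M/(1-s/T)$, so the velocity error is $\lesssim MT\log(1/R)\sim AT\log(N/A)$ and the position error is $\lesssim AT^{2}\log(N/A)$, and requiring this to be $\ll R\sim A/N$ amounts to $T^{2}\ll 1/\big(N\log(N/A)\big)$, achieved by taking $T$ small. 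One then picks $\delta$ and the angular spread small enough that neither the velocity smoothing nor the residual magnetic force degrades the concentration at $t_*$. Granting closeness to free transport, the true $\rho(t_*)$ and $E_r(t_*)$ match the free-transport values up to negligible error, and the theorem follows with $T$ relabelled as $t_*$.

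The step I expect to be the main obstacle is exactly this propagation of closeness to free transport: the electric force is \emph{repulsive} and, near collapse, grows like $(1-t/T)^{-1}$ along each ray, so one must verify quantitatively that over the short horizon $t_*\le T$ its time-integrated effect is small compared with the shrinking focal radius $R$. The delicate content is the correct ordering of the small parameters together with a Gronwall-type estimate that propagates the closeness up to $t_*$ while simultaneously controlling the small magnetic field $B$ and the exact preservation of radial symmetry; it is the smallness of the nominal focusing time $T$ that ultimately defeats the logarithmically growing repulsion and keeps the construction in the perturbative regime.
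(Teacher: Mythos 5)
Your proposal is sound in outline, and its engine is the same one the paper runs on: an annulus of particles launched with the linear inward profile $v\approx -x/T$ over a short horizon $T$, the a priori bound $|E_r(t,r)|\le M/(2\pi r)$ from the Gauss relation, mass conservation plus the Gauss relation for the lower bounds on $\rho(T)$ and $E_r(T)$, and a bootstrap showing the true characteristics stay within $o(R)$ of the free ones because the time-integrated repulsion is $O(MT^2\log(1/R))$. Indeed the paper's datum is concentrated at $\dot r_0\approx -r_0\,\epsilon^{k-2l}$, i.e.\ exactly $-r_0/T$ with $T\approx\epsilon^{2l-k}$, and its Lemma \ref{L:behaviorofcharacteristicsVPM} is precisely a quantitative comparison of $r(s)$ with $r_0-|\dot r_0|s$. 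Where you genuinely diverge is the magnetic field. You suppress it: $B(0)=0$ with the angular-velocity spread chosen small last, so the problem becomes a perturbation of 2D radial Vlasov--Poisson (and if you additionally take your velocity profile $\phi$ even in $v_\varphi$, then $j_\varphi\equiv 0$, hence $B\equiv E_\varphi\equiv 0$ exactly and the reduction is not even perturbative). The paper instead deliberately gives every particle a large angular velocity $\dot\varphi_0\approx\epsilon^{-l}$, so that $B$ is genuinely nontrivial (this is the point of Remark 1.2), and must then control $(E_\varphi,B)$ via the d'Alembert-type representation formulas of Lemma \ref{L:rep-E-B} and the continuity argument of Lemma \ref{L:est-E-B}, with the focal radius $R_+\sim\epsilon^{l-k}$ set by the angular-momentum barrier $r\dot\varphi^2=L^2/r^3$ rather than, as in your scheme, by the stopping time. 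Your route buys simplicity (no light-cone estimates for $(E_\varphi,B)$); it loses the feature the paper emphasizes, namely a focusing VM solution with a non-negligible magnetic field --- though the theorem as stated does not require one, so this is a legitimate proof of the statement. Two points you should still make explicit: the conserved angular momentum makes $v_\varphi\approx L/r$ \emph{grow} like $\sigma_0/R$ near the focus, so the centrifugal contribution $O(\sigma_0^2T^2/R^3)$ to the position forces $\sigma_0$ to be chosen small depending on both $R$ and $T$ (your parameter ordering permits this but does not say it); and the smallness of $B$ must be propagated through a representation formula such as \eqref{E:rep-E-B-eq6}, since Faraday's law alone does not close once $E_\varphi$ is no longer exactly zero.
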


\begin{remark}
The solution constructed in Theorem \ref{mainresultVM} has nontrivial magnetic field. On the other hand, it is more tricky to see if $\| E_\varphi \|_{L^\infty}$ or $\| B \|_{L^\infty}$ grows large in $[0, T]$ ($E_\varphi$ is the $\varphi$-component of the electric field $E$ under the polar coordinates), since the equations of $E_\varphi$ and $B$ are intertwined, see Lemma \ref{L:rep-E-B}. 
\end{remark}

\begin{remark}
The assumption $\supp_x f(0, x, v) \subset \{  \frac{1}{2}  \leq |x| \leq 1 \} $ can be replaced by $\supp_x f(0, x, v) \subset \{  \frac{1}{2} b \leq |x| \leq \frac{3}{2} b  \} $ with a minor change in the proof.
\end{remark}

\begin{remark}
Theorem \ref{mainresultVM} is \emph{conditional} since there is no known global existence result for classical solutions of \eqref{E: VM-V} -- \eqref{E: VM-M} which contains a nonzero magnetic field. In fact, the existence of global classical solutions to the non-relativistic VM system remains an unresolved challenging problem in the field.
\end{remark}



The Vlasov-Poisson and the Vlasov-Maxwell systems are among the most important models for dilute plasmas (see \cite{Friedberg1}, \cite{Nicholson1}). The issue of the existence of global-in-time solutions for Vlasov-Poisson is addressed in \cite{BD1}, \cite{GS1}, \cite{GS4}, \cite{LP1}, \cite{P1}, etc.. For the Vlasov-Maxwell equation, this question is much harder and more intriguing. Various progress has been made throughout the last several decades, see, for example, \cite{GS2}, \cite{GS3}, \cite{GStrauss1}, \cite{KS1}, \cite{LS1}, \cite{W1}. The reader can also refer to Glassey \cite{G1} for a thorough introduction of the basic Cauchy theory of the Vlasov-Poisson and the Vlasov-Maxwell systems. Moreover, for Vlasov-Poisson, a series of study on its detailed large time behavior has been carried out, see, for instance, \cite{GPS1}, \cite{GPS2}, \cite{GPS3}, \cite{GPS4}, \cite{H1}, \cite{IR1}, \cite{IPWW1}, \cite{Pankavich1}. 

The proof of our main result Theorem \ref{mainresultVM} relies upon an analysis of the particle trajectory and the mass conservation property of the equation. With a magnetic field, the analysis of the particle trajectory is more complicated. We make use of the dimension reduction brought in by the radial symmetry setting, and carry out an estimate on the electromagnetic field, so as to obtain enough information of the particle trajectory during the time interval during which the focusing behavior happens.

The contents of the paper are arranged as follows. In Section \ref{S:Settings}, we describe the basic settings on the problem, in particular we discuss some representation formulas for the electromagnetic field. In Section \ref{S:Particle-trajectories}, we give key lemmas that provide estimates for the electromagnetic field and description of the particle trajectories, which allow us to observe the focusing phenomena. Section \ref{S:mainthmproof} is devoted to the proof of Theorem \ref{mainresultVM}, which involves a careful selection of parameters and computation of the norms of $\rho (t, x)$ and $E_r(t, x)$.

\section{Settings}
\label{S:Settings}

We use the polar coordinates $(r, \varphi )$. We assume radial symmetry in the initial data, so $f(0) =f(0, r)$, $(E, B)(0) =(E, B) (0, r)$. We write \eqref{E: VM-V} -- \eqref{E: VM-M} in the polar coordinates:
\begin{equation} \label{E: VM-V-p}
\partial_t f + v_r \partial_r f + v_\varphi \frac{1}{r} \partial_\varphi f +  (E_r+ v_\varphi B + \frac{v_\varphi^2}{r}) \partial_{v_r} f  + ( E_\varphi - v_r B - \frac{v_r v_\varphi}{r})  \partial_{v_\varphi} f =0  ,
\end{equation}
\begin{equation}  \label{E: VM-M-p}
\begin{split}
& \frac{1}{r} \partial_{r} (r E_r)  +  \frac{1}{r}  \partial_{\varphi}  E_\varphi = \rho , \\
& \partial_t E_r = \frac{1}{r} \partial_{\varphi} B - j_r , \\
& \partial_t E_\varphi = - \partial_r B - j_\varphi , \\
& \partial_t B =  \frac{1}{r} \partial_{\varphi} E_r - \frac{1}{r} \partial_{r} (r E_\varphi ) , \\
\end{split}
\end{equation}
and assign the following radially symmetric initial and boundary conditions:
\begin{equation}  \label{E: VM-M-p-ibc}
\begin{split}
& f(0, r, v_r, v_\varphi) = f_0 (r, v_r, v_\varphi) \geq 0 , \\
& E_\varphi (0, r) = E_{\varphi, 0} (r) , \\
& B (0, r) = B_0 (r) , \\
& E_r (0, 0) = 0 . \\
\end{split}
\end{equation}
where $E_{\varphi, 0}$ and $B_0$ are given $C^1$ functions satisfying
\begin{equation}
\| (E_{\varphi, 0}, B_0 ) \|_{L^\infty} \leq \eta . 
\end{equation}

Writing the equations 
\begin{equation}
\dot{X}  = V,  \  \dot{V}  =  E (t, X ) + B  (V_2, -V_1)   
\end{equation}
under the polar coordinates, we obtain that the forward characteristics $(r(s), \varphi(s) )$ of the Vlasov equation are described by the following ODE system:
\begin{equation} \label{particletrajectoryODEVM}
\begin{split}
& \frac{d^2 r}{ds^2} = r \big(\frac{d \varphi}{ds} \big)^2  + E_r + r\dot{\varphi} B   , \\
& \frac{d}{ds} \big( r^2 \frac{d \varphi}{ds} \big)  = r E_\varphi - r \dot{r} B  .  \\
\end{split}
\end{equation}
for $s \geq 0$, with the initial conditions
\begin{equation}  \label{particletrajectoryinitialcondition-VM}
r(0) =r_0  , \ \varphi(0) =\varphi_0 , \  \dot{r} (0) =\dot{r}_0 , \  \dot{\varphi} (0) =\dot{\varphi}_0   . 
\end{equation}



We denote
\begin{equation}
S(t) := \{  (r,\varphi,  \dot{r} , \dot{\varphi} ) : f(t, r,\varphi,   \dot{r} , \dot{\varphi}  ) > 0  \}  . 
\end{equation}
In particular, 
\begin{equation}
S(0)  := \{  (r,\varphi,  \dot{r} , \dot{\varphi} ) : f_0(r,\varphi,  \dot{r} , \dot{\varphi}   ) > 0  \} . 
\end{equation}
(Notice that for the radial symmetry of the solution to \eqref{E: VM-V} -- \eqref{E: VM-M} can be propagated, see \cite{JSW1}, so we have $ f_0(r,\varphi,  \dot{r} , \dot{\varphi}   ) =  f_0(r, \dot{r} , \dot{\varphi}   )$, $f(t, r,\varphi,   \dot{r} , \dot{\varphi}  ) = f(t, r, \dot{r} , \dot{\varphi}  )$.) 

We introduce the following

\begin{lemma}  \label{L:rep-E-B}
Let $(f, E_r, E_\varphi, B)$ be a solution to the 2D radially symmetric VM. Let $P_\pm = r E_\varphi \pm r B$, $t_1 (t, r) = \max \{0, t-r \}$. (Notice that $t_1 (t, r) = \max \{0, t-r \} = 0$ when $t < r$.) We have
\begin{equation}   \label{E:rep-E-B-eq4}
r E_r (t, r) =   \int_0^r s \rho (t, s) ds , 
\end{equation}
\begin{equation}   \label{E:rep-E-B-eq5}
\begin{split}
& r E_\varphi (t, r) 
= \frac{1}{2} [ P_+ (t_1(t, r), r-t + t_1 (t, r) ) + P_- (0, r+t   )  ] \\
& \quad + \frac{1}{2} \int_{t_1(t, r)}^t (B- r j_\varphi) (\tau, r - t + \tau) d \tau +  \frac{1}{2} \int_{0}^t (B- r j_\varphi) (\tau, r + t - \tau) d \tau  ,   \\
\end{split}
\end{equation}
\begin{equation}   \label{E:rep-E-B-eq6}
\begin{split}
& r B (t, r) 
= \frac{1}{2} [ P_+ (t_1(t, r), r-t + t_1 (t, r) ) - P_- (0, r+t  )  ]  \\
& \quad + \frac{1}{2} \int_{t_1(t, r)}^t (B- r j_\varphi) (\tau, r - t + \tau) d \tau -  \frac{1}{2} \int_{0}^t (B- r j_\varphi) (\tau, r + t - \tau) d \tau  .   \\
\end{split}
\end{equation}
\end{lemma}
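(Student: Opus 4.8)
The plan is to use radial symmetry to collapse the Maxwell subsystem \eqref{E: VM-M-p} onto a pair of one-dimensional transport equations for the combinations $P_\pm = rE_\varphi \pm rB$, and then integrate those along their characteristics. Since every field is independent of $\varphi$, all $\partial_\varphi$ terms in \eqref{E: VM-M-p} vanish. The first (Gauss) equation then reduces to $\partial_r(rE_r) = r\rho$; integrating in $r$ from the origin and using that $rE_r \to 0$ as $r \to 0$ (which holds for a field bounded at the origin, consistent with the boundary condition $E_r(t,0)=0$) yields \eqref{E:rep-E-B-eq4} at once. This disposes of $E_r$ independently of the rest.

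For the two remaining formulas I would first derive the equations governing $P_\pm$. Multiplying the third and fourth equations of \eqref{E: VM-M-p} by $r$ gives $\partial_t(rE_\varphi) = -r\,\partial_r B - rj_\varphi$ and $\partial_t(rB) = -\partial_r(rE_\varphi)$. Rewriting the singular factor via $r\,\partial_r B = \partial_r(rB) - B$ and then adding and subtracting the two relations produces
\[
(\partial_t + \partial_r)P_+ = B - rj_\varphi, \qquad (\partial_t - \partial_r)P_- = B - rj_\varphi.
\]
Thus $P_+$ is transported along the outgoing characteristics $r - t = \text{const}$ and $P_-$ along the incoming characteristics $r + t = \text{const}$, both driven by the same source $B - rj_\varphi$. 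I would then integrate the $P_+$ equation along $\tau \mapsto (\tau,\, r - t + \tau)$ and the $P_-$ equation along $\tau \mapsto (\tau,\, r + t - \tau)$, producing a boundary/initial term plus a source integral in each case. Recovering $rE_\varphi = \tfrac12(P_+ + P_-)$ and $rB = \tfrac12(P_+ - P_-)$ then assembles \eqref{E:rep-E-B-eq5} and \eqref{E:rep-E-B-eq6} term by term.

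The one genuinely delicate point, and the reason the truncation $t_1(t,r) = \max\{0, t-r\}$ appears, is the lower limit of the $P_+$ integration. Traced backward in time, the characteristic $r(\tau) = r - t + \tau$ is decreasing and reaches the spatial origin $r=0$ exactly at $\tau = t - r$; hence for $t > r$ the transport equation is only usable down to $\tau = t_1(t,r)$, whereas for $t \le r$ it extends all the way to $\tau = 0$. At the stopping endpoint the characteristic sits at $r - t + t_1 = 0$, and because $P_+ = r(E_\varphi + B)$ vanishes at the origin for fields bounded there, the boundary term $P_+(t_1(t,r),\, r - t + t_1(t,r))$ correctly degenerates to the initial value $P_+(0, r-t)$ when $t \le r$ and to $0$ when $t > r$. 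The careful part of the argument is justifying this vanishing using the regularity of the radial fields at $r = 0$ (so that $P_+$ really is continuous up to the origin where it is zero).

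By contrast, the $P_-$ characteristic $r(\tau) = r + t - \tau$ moves \emph{outward} as $\tau$ decreases, so on $[0,t]$ it remains in $\{r > 0\}$ and never meets the origin. Its value is therefore read off directly from the initial data as $P_-(0, r+t)$, with no truncation and no boundary subtlety, which explains the asymmetry between the two families of terms in \eqref{E:rep-E-B-eq5} and \eqref{E:rep-E-B-eq6}.
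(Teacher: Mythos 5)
Your proof is correct and is essentially the argument the paper omits by deferring to Section 2.2 of \cite{JSW1}: reduce Gauss's law to $\partial_r(rE_r)=r\rho$ for \eqref{E:rep-E-B-eq4}, derive the transport equations $(\partial_t\pm\partial_r)P_\pm=B-rj_\varphi$, and integrate along the outgoing and incoming null characteristics, with the truncation at $t_1(t,r)$ accounting for the outgoing characteristic meeting the origin where $P_+$ vanishes. The verification that $P_+\to 0$ as $r\to 0$ for bounded radial fields, which you correctly flag as the one delicate point, is exactly what makes the boundary term in \eqref{E:rep-E-B-eq5}--\eqref{E:rep-E-B-eq6} degenerate consistently in the two cases $t\le r$ and $t>r$.
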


\begin{proof}
The proof is similar to the argument in Section 2.2 in \cite{JSW1} so we omit it. 
\end{proof}

\section{Estimate on the Field and the Particle Trajectories}
\label{S:Particle-trajectories}

We introduce the following lemma estimating the electromagnetic field.



\begin{lemma}  \label{L:est-E-B}
Let $(f, E_r, E_\varphi, B)$ be a solution to the 2D radially symmetric VM. Assume 
\begin{equation}  \label{E:est-E-B-eq1}
\begin{split}
M
& = \iint f (t, x, v) dv dx = \iint f_0 (x, v) dv dx  \\
& = 2 \pi \int_0^{+\infty} \int_{\mathbb{R}^2} r f_0 (r, v) dv dr = 2 \pi \int_0^{+\infty}  r \rho_0 (r) dr  < +\infty .  
\end{split}
\end{equation}
Then we have, for any $T_1 >0$,
\begin{equation}  \label{E:est-E-B-eq2}
| E_r (T_1, R) |  \leq \frac{M}{2 \pi R} . 
\end{equation}
Moreover, let $(r, \varphi) (t,  r_0 , \varphi_0 ,  \dot{r}_0 , \dot{\varphi}_0 )$ solve the ODE \eqref{particletrajectoryODEVM} -- \eqref{particletrajectoryinitialcondition-VM}. 
If for all $t \in [0, T_1]$,
\begin{equation}   \label{E:est-E-B-eq3}
r (t,  r_0 , \varphi_0 ,  \dot{r}_0 , \dot{\varphi}_0 )  \leq r_0 , \quad  \inf_{ (r_0 ,\varphi_0,  \dot{r}_0 , \dot{\varphi}_0 )\in S(0)} r (t,  r_0 , \varphi_0 ,  \dot{r}_0 , \dot{\varphi}_0 )  \geq 6 t ,  \quad   r(t)^2 \dot{\varphi} (t)  \leq \frac{4}{3} r_0^2 \dot{\varphi}_0 ,  
\end{equation}
and for some positive numbers $K$, $K_1$,
\begin{equation}   \label{E:est-E-B-eq7}
\begin{split}
& \rho (t, r) \leq K , \\
& \sup_{ (r_0 ,\varphi_0,  \dot{r}_0 , \dot{\varphi}_0 )\in S(0)}  |\dot{\varphi}_0|  \leq  K_1  ,  \\ 
& \sup_{ (r_0 ,\varphi_0,  \dot{r}_0 , \dot{\varphi}_0 )\in S(0)}  |r_0|  \leq  1  ,  \\ 
\end{split}
\end{equation}
with $\| ( E_{\varphi, 0}, B_0) \|_{L^\infty} \leq \frac{2KK_1T_1}{R}$, 
then for any $R \geq 6T_1$, 
\begin{equation}  \label{E:est-E-B-eq4}
| ( E_\varphi, B) (T_1, R) | \leq \| ( E_{\varphi, 0}, B_0) \|_{L^\infty} + \frac{6KK_1T_1}{R} .
\end{equation}
\end{lemma}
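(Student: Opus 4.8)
The plan is to treat the three assertions in turn, starting with the bound \eqref{E:est-E-B-eq2} on $E_r$, which is essentially immediate. Evaluating the representation \eqref{E:rep-E-B-eq4} at $(T_1, R)$ gives $R\,E_r(T_1, R) = \int_0^R s\,\rho(T_1, s)\,ds$. Since $f \ge 0$ we have $\rho \ge 0$, so this integral is nonnegative and bounded above by $\int_0^{+\infty} s\,\rho(T_1, s)\,ds$, which by the mass identity \eqref{E:est-E-B-eq1} together with conservation of mass equals $M/(2\pi)$. Dividing by $R$ yields $0 \le E_r(T_1, R) \le M/(2\pi R)$, which is \eqref{E:est-E-B-eq2}; no trajectory information enters here.

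For the pair $(E_\varphi, B)$ I would work from \eqref{E:rep-E-B-eq5}--\eqref{E:rep-E-B-eq6}. Since $R \ge 6T_1 > T_1$ we have $t_1(T_1, R) = 0$, so both formulas simplify: the boundary terms reduce to $P_+(0, R - T_1)$ and $P_-(0, R+T_1)$, and the two source integrals run over $[0, T_1]$ with integrand $h := B - s\,j_\varphi$ evaluated along the characteristics $s_+(\tau) = R - T_1 + \tau$ and $s_-(\tau) = R + T_1 - \tau$. Thus $R\,E_\varphi(T_1, R)$ and $R\,B(T_1, R)$ are each built from three kinds of terms: (i) the initial-data boundary terms, (ii) the contribution of $s\,j_\varphi$, and (iii) the contribution of $B$ itself inside the source. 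The goal is to bound each by an appropriate multiple of $R\,\|(E_{\varphi,0}, B_0)\|_{L^\infty}$ or of $KK_1T_1$, and then divide by $R$.

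For (i), since $P_\pm(0, s) = s\,(E_{\varphi,0}(s) \pm B_0(s))$ we get $|P_+(0, R-T_1)| \le (R-T_1)\,\|(E_{\varphi,0}, B_0)\|_{L^\infty}$ and $|P_-(0, R+T_1)| \le (R+T_1)\,\|(E_{\varphi,0}, B_0)\|_{L^\infty}$; the crucial point is that in the half-sum $\tfrac12(|P_+| + |P_-|)$ the $\pm T_1$ in the radial prefactors cancel, leaving exactly $R\,\|(E_{\varphi,0}, B_0)\|_{L^\infty}$, so that after division by $R$ this produces precisely the coefficient-one term in \eqref{E:est-E-B-eq4}. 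For (ii), I would use the angular-momentum control in \eqref{E:est-E-B-eq3}: along every trajectory $s\,v_\varphi = r^2\dot\varphi \le \tfrac43 r_0^2\dot\varphi_0 \le \tfrac43 K_1$ by \eqref{E:est-E-B-eq7}, whence $|s\,j_\varphi(\tau, s)| = \big|\int s\,v_\varphi\, f\,dv\big| \le \tfrac43 K_1\,\rho(\tau, s) \le \tfrac43 KK_1$; integrating over $[0, T_1]$ along each characteristic and summing the two half-integrals bounds this contribution by $\tfrac43 KK_1T_1$.

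The main obstacle is (iii): the source $h$ contains $B$, the very quantity being estimated, so the representation is self-referential and cannot be closed in a single pass. Writing $B = (P_+ - P_-)/(2s)$, the pair $(P_+, P_-)$ solves, along the $\pm$ characteristics, a linear system whose only feedback coefficient is $1/(2s)$. On the backward domain of dependence of $(T_1, R)$ — the triangle whose time-$\tau$ section is $[\,R - T_1 + \tau,\ R + T_1 - \tau\,]$ — every radius satisfies $s \ge R - T_1 \ge 5T_1$, so this coefficient is at most $1/(10T_1)$ and its integral over $[0, T_1]$ is at most $1/10$. I would therefore set $m(\tau) = \sup\{|P_+| + |P_-|\}$ over the time-$\tau$ section, combine the two characteristic integrations into an inequality of the form $m(\tau) \le m(0) + \tfrac{1}{10T_1}\int_0^\tau m\,d\tau' + C\,KK_1T_1$, and close it by Gr\"onwall; the resulting exponential factor is $O(1)$ thanks to the $1/10$ bound, while the smallness hypothesis $\|(E_{\varphi,0}, B_0)\|_{L^\infty} \le 2KK_1T_1/R$ keeps $m(0)$ of the same order as the generated field. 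Carefully tracking the constants through this step (the half-sums, the two characteristics, and the Gr\"onwall factor) is where the real work lies, and it is what ultimately pins down the admissible constant $6$ in \eqref{E:est-E-B-eq4}; dividing the final bound on $|P_+| + |P_-|$ by $2R$ then controls $|E_\varphi(T_1, R)|$ and $|B(T_1, R)|$ simultaneously.
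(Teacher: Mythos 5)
Your proposal is correct, and the first two-thirds of it (the $E_r$ bound via \eqref{E:rep-E-B-eq4} and mass conservation, the reduction $t_1(T_1,R)=0$ for $R\ge 6T_1$, the initial-data boundary terms, and the $j_\varphi$ contribution controlled via $r^2\dot\varphi\le \tfrac43 r_0^2\dot\varphi_0\le \tfrac43 K_1$ and $\rho\le K$) coincide with the paper's argument. Where you genuinely diverge is the self-referential $B$ term: the paper runs a first-time-of-failure contradiction — it defines $t_0$ as the first time $\sup_r|rB|$ exceeds $r\|(E_{\varphi,0},B_0)\|_{L^\infty}+6KK_1T_1$, deduces from the representation that the two characteristic integrals of $B$ must then exceed $4KK_1T_1$, and by the mean value theorem finds an earlier point where $B$ already violates the bound (using $t_0<\tfrac{r-t_0}{5}$ so that $r'\ge r/2$) — whereas you close the loop by a Gr\"onwall inequality for $m(\tau)=\sup(|P_+|+|P_-|)$ over the sections of the backward light cone of $(T_1,R)$. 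Both arguments hinge on the same geometric fact, namely that every radius in the relevant region is at least $R-T_1\ge 5T_1$ so the feedback coefficient $1/(2s)$ integrates to an $O(1)$ quantity; your version is the more systematic one and sidesteps a slightly delicate point in the paper's proof (the contradiction argument implicitly needs the near-maximizing radius $r$ to satisfy $r\ge 6t_0$). Two small caveats in your constants: since both characteristics feed back, the Gr\"onwall coefficient is $\tfrac{1}{5T_1}$ rather than $\tfrac{1}{10T_1}$ (giving $e^{1/5}$ instead of $e^{1/10}$), and the exact cancellation of the $\pm T_1$ prefactors in the boundary terms happens only at the apex — at interior points of the cone you pick up a factor $(1+T_1/R)\le 7/6$ on the $\|(E_{\varphi,0},B_0)\|_{L^\infty}$ term. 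Both excesses are absorbed, exactly as you anticipate, by the hypothesis $\|(E_{\varphi,0},B_0)\|_{L^\infty}\le 2KK_1T_1/R$ and the generous constant $6$, so the argument closes.
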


\begin{proof}
\eqref{E:est-E-B-eq2} follows from \eqref{E:rep-E-B-eq4} -- \eqref{E:rep-E-B-eq6}, due to that $| \int_0^r s \rho (T_1, s) ds | \leq \frac{M}{2 \pi }$ for all $T_1> 0$. 



For the proof of \eqref{E:est-E-B-eq4}, we first observe
\begin{equation}
\int_0^{T_1}  ( R j_\varphi) (\tau, R + T_1 - \tau)  d \tau  \leq 2 KK_1 T_1  . 
\end{equation}
This is because on the support of the integral we have $R j_\varphi (\tau, R+T_1-\tau) = \int R (R+T_1-\tau) \dot{\varphi} f (\tau, R+T_1-\tau, v) dv \leq \int \frac{3}{2} R^2 \dot{\varphi} f (\tau, R+T_1-\tau, v) dv \leq \int 2 r_0^2 \dot{\varphi}_0 f (\tau, R+T_1-\tau, v)  dv \leq 2 K_1  \int f (\tau, R+T_1-\tau, v)  dv  =  2K_1  \rho (\tau, R+T_1-\tau, v)  $. Here we have used $ R^2 \dot{\varphi}  \leq \frac{4}{3} r_0^2 \dot{\varphi}_0 $, which follows by the assumption of the lemma.

%


Assume that for some $t \in [0, T_1]$, $\sup_{r >0} |rB(t, r)| \geq r \| ( E_{\varphi, 0}, B_0) \|_{L^\infty}  + 6KK_1T_1 $. Let $t_0 := \inf \{ t>0 : \sup_{r >0} |rB(t, r)| \geq  r \| ( E_{\varphi, 0}, B_0) \|_{L^\infty}  + 6KK_1 T_1 \}$. We have, for some $r>0$,
\begin{equation}  \label{E:est-E-B-eq5}
\begin{split}
& r \| ( E_{\varphi, 0}, B_0) \|_{L^\infty}  +   6KK_1 T_1 \leq r B(t_0, r)  \\
& \leq r \| ( E_{\varphi, 0}, B_0) \|_{L^\infty}  +  \frac{1}{2} \int_0^{t_0} B (\tau, r-t_0+\tau) d\tau - \frac{1}{2} \int_0^{t_0} B (\tau, r+t_0-\tau) d\tau + 2KK_1T_1 ,  \\
\end{split}
\end{equation}
which implies
\begin{equation}
\frac{1}{2} \int_0^{t_0} B (\tau, r-t_0+\tau) d\tau - \frac{1}{2} \int_0^{t_0} B (\tau, r+t_0-\tau) d\tau \geq 4KK_1 T_1 . 
\end{equation}
Therefore there must exists some $t' \in [0, t_0]$, $r' \in [r-t_0, r+t_0]$, such that $B(t', r') \geq \frac{4KK_1 T_1}{t_0} \geq \frac{20KK_1 T_1}{r-t_0} \geq \frac{20KK_1 T_1}{r}$ (here we used that $t_0 < \frac{r-t_0}{5}$, which follows from \eqref{E:est-E-B-eq3}). Since $r' \in [r-t_0, r+t_0]$, we have $r' \geq \frac{r}{2}$. Hence $B(t', r') \geq \frac{20KK_1 T_1}{r} \geq \frac{10KK_1 T_1}{r'}  >  \| ( E_{\varphi, 0}, B_0) \|_{L^\infty} +  \frac{6KK_1 T_1}{r'}$. A contradiction to the definition of $t_0$. Using \eqref{E:rep-E-B-eq5}, we obtain the estimate for $E_\varphi$. 

This completes the proof of the lemma.
\end{proof}

We also give the following lemma, which describes the behavior of the particle trajectories of VM when the particle trajectories satisfy $0 < r_0 \leq 1$, $\dot{r}_0<0$, $\dot{\varphi}_0 > 0$ at time $0$. 

\begin{lemma} \label{L:behaviorofcharacteristicsVPM}
Let $0 < r_0 \leq 1$, $\dot{r}_0<0$, $\dot{\varphi}_0 > 0$, and let $(r(t), \varphi(t))$ be a solution to \eqref{particletrajectoryODEVM} and \eqref{particletrajectoryinitialcondition-VM} for all $t \geq 0$. Assume that $ |(E_r, E_\varphi, B) (t)| \leq \frac{m}{3r} $ for some constant $m>0$ (independent of $r$, $t$) on the support of $f(t)$. 
Define:
$$ \mathcal{A} : = \dot{r}_0^2 + r_0^{-2} (  \frac{1}{2} m r_0 + r_0^2 \dot{\varphi}_0 )^2  + 2 m r_0^{-1} (   \frac{1}{2} m r_0+ r_0^2 \dot{\varphi}_0 ) -2 m \ln r_0  , $$ 
$$ \mathcal{B} :=  2 m r_0 (   \frac{1}{2} m r_0+ r_0^2 \dot{\varphi}_0 ) + (  \frac{1}{2} m r_0 + r_0^2 \dot{\varphi}_0 )^2 +2m . $$
Assume that
\begin{equation}
\begin{split}
& - \frac{1}{2}  mr_0 + r_0^2  \dot{\varphi}_0 > 0 ,  \\
& \mathcal{A} > 0 , \quad  \mathcal{B} > 0 , \quad  \mathcal{A} r_0^2 - \mathcal{B}  > 0 . \\
\end{split}
\end{equation}
Also, we assume that $T_0 \leq \frac{1}{100} r_0$, and for $s \in [0, T_0]$, $\dot{r} (s) < 0$. Then for all $s \in [0, T_0]$, 
$$ r(s)^2    \leq (r_0 - \sqrt{\mathcal{A}  - \mathcal{B} r_0^{-2}} s )^2 + \mathcal{B} r_0^{-2} s^2 .   $$
\end{lemma}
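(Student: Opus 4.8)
The plan is to follow, along the characteristic, the two mechanical quantities $L(s):=r(s)^2\dot\varphi(s)$ (angular momentum) and $\mathcal E(s):=\dot r(s)^2+L(s)^2/r(s)^2$ (squared speed), and then to reduce the claimed bound to a scalar ODE comparison. Writing the Lorentz force in Cartesian form $\dot V=E+B(V_2,-V_1)$ and using that the magnetic part is orthogonal to $V$, one gets the clean work identity $\frac{d}{ds}\mathcal E=2V\cdot E=2\dot r E_r+\tfrac{2L}{r}E_\varphi$, so only the electric field changes the speed; moreover the second line of \eqref{particletrajectoryODEVM} is $\dot L=rE_\varphi-r\dot r B$. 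The first step is to confine $L$. Since $\dot r<0$ on $[0,T_0]$ we have $\int_0^s|\dot r|\,d\tau=r_0-r(s)\le r_0$, and with $|E_\varphi|,|B|\le m/(3r)$ this gives $|\dot L|\le \tfrac{m}{3}(1+|\dot r|)$, whence $|L(s)-r_0^2\dot\varphi_0|\le\tfrac{m}{3}T_0+\tfrac{m}{3}r_0<\tfrac12 mr_0$ by $T_0\le r_0/100$. Thus $L(s)\in[L_-,L_+]$ with $L_\pm:=r_0^2\dot\varphi_0\pm\tfrac12 mr_0$; in particular $L(s)>0$ since $L_->0$ by hypothesis.

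Next I set $h:=r^2$. While $\dot r<0$ one may write $\dot r=-\sqrt{\mathcal E-L^2/r^2}$, so $h$ obeys the first-order equation $\dot h=-2\sqrt{\mathcal E h-L^2}$, $h(0)=r_0^2$. A direct integration shows that the claimed barrier $q(s):=(r_0-\sqrt{\mathcal A-\mathcal B r_0^{-2}}\,s)^2+\mathcal B r_0^{-2}s^2=r_0^2-2\sqrt{\mathcal A r_0^2-\mathcal B}\,s+\mathcal A s^2$ is \emph{exactly} the solution of the model equation $\dot q=-2\sqrt{\mathcal A q-\mathcal B}$ with $q(0)=r_0^2$ (the hypothesis $\mathcal A r_0^2-\mathcal B>0$ makes the radicand positive). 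By a standard first-contact argument for scalar ODEs (regularizing $q$ to $q+\epsilon(1+s)$ at tangencies), to obtain $h\le q$ on $[0,T_0]$ it suffices to verify that $q$ is a supersolution of the trajectory equation: at any point where $h=q$ one needs $-2\sqrt{\mathcal A h-\mathcal B}\ge-2\sqrt{\mathcal E h-L^2}$, which after squaring is the pointwise inequality
$$(\mathcal A-\mathcal E(s))\,h(s)\le \mathcal B-L(s)^2 .$$

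The crux is therefore this single inequality, and the constants $\mathcal A,\mathcal B$ are calibrated precisely to make it close. It holds at $s=0$ with room to spare: a computation using $L_0:=r_0^2\dot\varphi_0$ and the definition of $\mathcal A,\mathcal B$ gives $(\mathcal A-\mathcal E(0))r_0^2-(\mathcal B-L_0^2)=-2m\bigl(1+r_0^2\ln r_0\bigr)$, which is strictly negative because $1+r_0^2\ln r_0>0$ on $(0,1]$ (the minimum of $r_0^2\ln r_0$ being $-1/(2e)>-1$). To propagate the inequality I use that $E_r\ge0$ (from $rE_r=\int_0^r s\rho\,ds$ with $\rho\ge0$) together with $\dot r<0$, so the radial field only \emph{lowers} $\mathcal E$, and I bound the azimuthal work through $\bigl|\tfrac{2L}{r}E_\varphi\bigr|\le\tfrac{2mL_+}{3r^2}$. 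The quantitative inputs are the pointwise bound $1/r\le\sqrt{\mathcal E}/L_-$ (from $\dot r^2\ge0$ and $L\ge L_-$) and the no-return property: since $\dot r<0$ throughout $[0,T_0]$ the characteristic never reaches perihelion, so $r(s)\ge L_-/\sqrt{\mathcal E}$, which tames the singular factors $1/r$ and $1/r^2$ in the work integrals; combined with $T_0\le r_0/100$ this keeps the drift of $\mathcal E$ (and of $L$) within the $O(m)$ margins that the terms $\tfrac{2mL_+}{r_0}$, $-2m\ln r_0$ in $\mathcal A$ and $2mr_0L_++2m$ in $\mathcal B$ were built to absorb.

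The main obstacle lives in this propagation. One cannot bound the two factors separately: the energy $\mathcal E$ genuinely \emph{decreases} under the repulsive radial field while $h$ simultaneously decreases, and replacing $h$ by $r_0^2$ fails badly near perihelion, where $h$ is tiny but the accumulated drop $\mathcal E(0)-\mathcal E(s)$ is largest. The argument must track the \emph{product} $(\mathcal A-\mathcal E)h-(\mathcal B-L^2)$, exploiting the cancellation between the growth of $\mathcal A-\mathcal E$ and the decay of $h$ (for instance by differentiating this quantity and estimating, starting from the negative initial value $-2m(1+r_0^2\ln r_0)$). Once the pointwise inequality $(\mathcal A-\mathcal E)h\le\mathcal B-L^2$ is secured on $[0,T_0]$, the comparison principle yields $r(s)^2=h(s)\le q(s)$, which is exactly the stated bound.
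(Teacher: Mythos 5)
Your setup is sound and you have correctly isolated the crux: confining the angular momentum $L(s)$ to $[L_-,L_+]$ with $L_\pm=r_0^2\dot\varphi_0\pm\tfrac12 mr_0$ works exactly as you say (and matches the paper), the barrier $q$ is indeed the solution of $\dot q=-2\sqrt{\mathcal A q-\mathcal B}$, and the comparison reduces to the pointwise inequality $(\mathcal A-\mathcal E)h\le\mathcal B-L^2$, equivalently $r^2\dot r^2\ge \mathcal A r^2-\mathcal B$. Your verification at $s=0$, including the identity $(\mathcal A-\mathcal E(0))r_0^2-(\mathcal B-L_0^2)=-2m(1+r_0^2\ln r_0)<0$, is correct. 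But the proof stops there: for $s>0$ you only \emph{describe} what a propagation argument ``must'' do (track the product, differentiate, exploit cancellation between the growth of $\mathcal A-\mathcal E$ and the decay of $h$), you explicitly flag this as the main obstacle, and you never carry it out. Since the entire quantitative content of the lemma --- the specific constants in $\mathcal A$ and $\mathcal B$ --- lives in precisely this inequality, what you have is a reduction plus a plan, not a proof. It is also not clear that your plan closes: differentiating $(\mathcal A-\mathcal E)h-(\mathcal B-L^2)$ produces terms like $2\dot r E_r\,h$ and $(\mathcal A-\mathcal E)\dot h$ whose signs fight each other exactly in the regime you identify as dangerous (near perihelion), and no Gronwall-type bound is exhibited.

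The inequality does not need a continuity/product-tracking argument; it comes out in integrated form directly. From the two-sided bound $0<r^{-2}L_-\le\dot\varphi\le r^{-2}\mathcal C$ (with $\mathcal C=L_+$) and $|E_r|,|B|\le m/(3r)$, the radial equation gives $\ddot r\le \mathcal C^2 r^{-3}+mr^{-1}+m\mathcal C r^{-2}$ for $s\in[0,T_0]$. Multiplying by $\dot r<0$ reverses the inequality and makes the right-hand side an exact $s$-derivative of an effective potential, so integration yields
\begin{equation*}
\dot r^2\ \ge\ \dot r_0^2+\bigl(r_0^{-2}-r^{-2}\bigr)\mathcal C^2+2m\ln\tfrac{r}{r_0}+2m\bigl(r_0^{-1}-r^{-1}\bigr)\mathcal C .
\end{equation*}
Multiplying by $r^2$, using $r\le r_0$ to bound $-2mr\mathcal C\ge-2mr_0\mathcal C$ is not needed in that direction --- rather one uses $r\le r_0$ and $r^2\ln r\ge-\tfrac{1}{2e}\ge-1$ to absorb the logarithm --- and one lands exactly on $r^2\dot r^2\ge\mathcal A r^2-\mathcal B$, i.e.\ your key inequality, pointwise on $[0,T_0]$, with $\mathcal A$ and $\mathcal B$ emerging as written. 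From there your ODE comparison (or a direct integration of $-\tfrac12\tfrac{d}{ds}(r^2)\ge\sqrt{\mathcal A r^2-\mathcal B}$, with the separate observation that $\mathcal B/\mathcal A$ is the minimum of the parabola) finishes the lemma. You should replace the heuristic propagation paragraph with this effective-potential integration; as written the argument has a genuine gap at its central step.
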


\begin{proof}
Integrating the second line of \eqref{particletrajectoryODEVM} over $[0, s]$ and using $ |(E_r, E_\varphi, B) (t)| \leq \frac{m}{3r} $, we learn that 
$$ r^{-2} ( -ms+ \frac{1}{3} mr - \frac{1}{3} m r_0+ r_0^2 \dot{\varphi}_0 )   \leq \dot{\varphi} \leq r^{-2} ( ms -\frac{1}{3} mr + \frac{1}{3} m r_0+ r_0^2 \dot{\varphi}_0 )  $$
provided $r > 0$. Hence by the first line of \eqref{particletrajectoryODEVM} we obtain
$$ \ddot{r} (s) \leq  \frac{1}{r^3} \max \{  ( ms-\frac{1}{3} mr + \frac{1}{3} m r_0 + r_0^2 \dot{\varphi}_0 )^2 , ( \frac{1}{3} mr - \frac{1}{3} m r_0 + r_0^2 \dot{\varphi}_0 )^2  \} + \frac{m}{r} + m \max\{ \dot{\varphi}, 0 \}  ,  $$
$$ \ddot{r} (s) \geq   \frac{1}{r^3}  \min \{  ( -\frac{1}{3} mr + \frac{1}{3} m r_0 + r_0^2 \dot{\varphi}_0 )^2 , ( -ms+ \frac{1}{3} mr - \frac{1}{3} m r_0 + r_0^2 \dot{\varphi}_0 )^2  \}   - \frac{m}{r} - m | \dot{\varphi} |  .  $$
Since $ - \frac{1}{2}  mr_0 + r_0^2  \dot{\varphi}_0 > 0$, and for $s \in [0, T_0]$, $s \leq T_0 \leq \frac{1}{100} r_0$, we deduce from the inequalities above that
$$ 0 < r^{-2} (   - \frac{1}{2} m r_0+ r_0^2 \dot{\varphi}_0 )   \leq \dot{\varphi} \leq r^{-2} (   \frac{1}{2} m r_0+ r_0^2 \dot{\varphi}_0 ) ,  $$
$$ \ddot{r} (s) \leq  r^{-3} \max \{  (  -\frac{1}{2} mr + \frac{1}{2} m r_0 + r_0^2 \dot{\varphi}_0 )^2 , ( \frac{1}{2} mr - \frac{1}{2} m r_0 + r_0^2 \dot{\varphi}_0 )^2  \} + mr^{-1} + m \max\{ \dot{\varphi}, 0 \}  ,  $$
$$ \ddot{r} (s) \geq  r^{-3}  \min \{  ( -\frac{1}{2} mr + \frac{1}{2} m r_0 + r_0^2 \dot{\varphi}_0 )^2 , (  \frac{1}{2} mr - \frac{1}{2} m r_0 + r_0^2 \dot{\varphi}_0 )^2  \}   - mr^{-1} - m | \dot{\varphi} |  .  $$
Let us denote $ \mathcal{C} = \frac{1}{2} m r_0 + r_0^2 \dot{\varphi}_0 $. Using $r(s) \leq r_0$ for $s \in [0, T_0]$, from the inequality $ \ddot{r} \leq r^{-3} \max \{  ( -\frac{1}{2} mr + \mathcal{C} )^2 , ( \frac{1}{2} mr - \frac{1}{2} m r_0 + r_0^2 \dot{\varphi}_0 )^2  \} + mr^{-1} + m \max\{ \dot{\varphi}, 0 \}  $ and $ 0 < r^{-2} (   - \frac{1}{2} m r_0+ r_0^2 \dot{\varphi}_0 )   \leq \dot{\varphi} \leq r^{-2} \mathcal{C}  $, we have
\begin{equation*}
\begin{split}
\ddot{r} 
& \leq r^{-3}  ( - \frac{1}{2} m r +  \mathcal{C} )^2  + mr^{-1} + m  r^{-2} \mathcal{C}   \\
& \leq r^{-3}  \mathcal{C}^2  + mr^{-1} + m  r^{-2} \mathcal{C}  .  \\
\end{split}
\end{equation*}
Multiplying this inequality by $\dot{r}$, we obtain
$$ \frac{1}{2} \big( \frac{d (\dot{r}^2)}{ds} \big)  \geq r^{-3}  \dot{r}  \mathcal{C}^2  +  m r^{-1} \dot{r} +  m  r^{-2}  \dot{r} \mathcal{C} .  $$
Integrating yields
\begin{equation} \label{E:behaviorofcharacteristicsVM-eq4}
\dot{r}^2 \geq   \dot{r}_0^2 +  (r_0^{-2} -r^{-2} ) \mathcal{C}^2  + 2m  \ln (\frac{r}{r_0}) + 2 m ( r_0^{-1}  - r^{-1}  ) \mathcal{C}  . 
\end{equation}

For $s \in [0, T_0]$, multiplying the inequality \eqref{E:behaviorofcharacteristicsVM-eq4} by $r^2$ and using $ r (s) \leq r_0 $, we obtain
\begin{equation}
\begin{split}
\big( \frac{1}{2} \frac{d}{ds} (r^2) \big)^2 
& \geq \dot{r}_0^2 r^2 +  (r_0^{-2} r^2 - 1 ) \mathcal{C}^2  + 2m  \ln (\frac{r}{r_0}) r^2 + 2 m ( r_0^{-1}  r^2 - r  ) \mathcal{C}  \\
& = [ \dot{r}_0^2 + r_0^{-2} \mathcal{C}^2 + 2 m r_0^{-1} \mathcal{C}  ] r^2   + 2m  \ln (\frac{r}{r_0}) r^2  - 2 m r \mathcal{C} - \mathcal{C}^2 . \\
\end{split}
\end{equation}
By the property of the function $y= x^2 \ln x$, we have $\ln (\frac{r}{r_0}) r^2 = \ln (r) r^2 - \ln (r_0) r^2 \geq - \frac{1}{2e} -  r^2 \ln r_0   \geq -1-  r^2\ln r_0 $. Hence we have
\begin{equation}
\begin{split}
\big( \frac{1}{2} \frac{d}{ds} (r^2) \big)^2 
& \geq [ \dot{r}_0^2 + r_0^{-2} \mathcal{C}^2  + 2 m r_0^{-1} \mathcal{C}  ] r^2 + 2m (-1-  r^2\ln r_0)    - 2 m r_0 \mathcal{C} - \mathcal{C}^2 \\
& = [ \dot{r}_0^2 + r_0^{-2} \mathcal{C}^2  + 2 m r_0^{-1} \mathcal{C} -2m 
\ln r_0 ] r^2   - 2 m r_0 \mathcal{C} - \mathcal{C}^2 -2m \\
& = \mathcal{A} r^2 - \mathcal{B} , \\
\end{split}
\end{equation}
where 
\begin{equation}
\begin{split}
\mathcal{A}
&  :=   \dot{r}_0^2 + r_0^{-2} \mathcal{C}^2   + 2 m r_0^{-1} \mathcal{C} - 2m \ln r_0   \\
& =   \dot{r}_0^2 + r_0^{-2} (  \frac{1}{2} m r_0 + r_0^2 \dot{\varphi}_0 )^2   + 2 m r_0^{-1} (   \frac{1}{2} m r_0+ r_0^2 \dot{\varphi}_0 ) - 2m \ln r_0 ,   \\
\end{split}
\end{equation}
\begin{equation}
\begin{split}
\mathcal{B}
&  :=   2 m r_0 \mathcal{C} + \mathcal{C}^2 + 2m   \\
& =   2 m r_0 (   \frac{1}{2} m r_0+ r_0^2 \dot{\varphi}_0 ) + (  \frac{1}{2} m r_0 + r_0^2 \dot{\varphi}_0 )^2 + 2m .   \\
\end{split}
\end{equation}
With the assumptions we have $\mathcal{A} > 0 $ and $\mathcal{B} > 0$.




If $r(s) > \sqrt{\frac{\mathcal{B}}{\mathcal{A}}}$, then $r_0 > \sqrt{\frac{\mathcal{B}}{\mathcal{A}}}$. Moreover, $\mathcal{A} r_0^2 - \mathcal{B}  > 0$ due to the assumptions. We have 
$$- \frac{1}{2} \frac{d}{ds} (r(s)^2) \big/ \sqrt{\mathcal{A} r(s)^2 - \mathcal{B}} \geq 1 \ , $$
which implies
$$  \frac{d}{ds} \sqrt{\mathcal{A} r(s)^2 - \mathcal{B}} \leq - \mathcal{A} \ . $$
Integrating gives
$$ \sqrt{\mathcal{A} r(s)^2 - \mathcal{B}} \leq \sqrt{\mathcal{A} r(0)^2 - \mathcal{B}} - \mathcal{A}s \ . $$
Therefore
\begin{equation}
\begin{split}
r(s)^2 
& \leq \mathcal{A} s^2 + r_0^2 - 2s \sqrt{\mathcal{A} r_0^2 - \mathcal{B}} \\
& = (r_0 - \sqrt{\mathcal{A}  - \mathcal{B} r_0^{-2}} s )^2 + \mathcal{B} r_0^{-2} s^2 \ . \\
\end{split}
\end{equation}
Hence in general 
$$ r(s)^2 \leq \max \{ \frac{\mathcal{B}}{\mathcal{A}}, (r_0 - \sqrt{\mathcal{A}  - \mathcal{B} r_0^{-2}} s )^2 + \mathcal{B} r_0^{-2} s^2 \} \ . $$
Noticing that $ \frac{\mathcal{B}}{\mathcal{A}}$ is actually the minimum of the parabola $(r_0 - \sqrt{\mathcal{A}  - \mathcal{B} r_0^{-2}} s )^2 + \mathcal{B} r_0^{-2} s^2 $ (with respect to $s$), attained at $s = s_m : = \frac{\sqrt{\mathcal{A} r_0^2 -\mathcal{B}}}{\mathcal{A}}$. Therefore we conclude, for all $s \in [0, T_0]$,
\begin{equation}
\begin{split}
r(s)^2 
&  \leq (r_0 - \sqrt{\mathcal{A}  - \mathcal{B} r_0^{-2}} s )^2 + \mathcal{B} r_0^{-2} s^2 .  \\
\end{split}
\end{equation}

This completes the proof of the lemma.

\end{proof}

\section{Proof of Theorem \ref{mainresultVM}} 
\label{S:mainthmproof}

Recall that 
\begin{equation}
S(t) := \{  (r,\varphi,  \dot{r} , \dot{\varphi} ) : f(t, r,\varphi,   \dot{r} , \dot{\varphi}  ) > 0  \}  ,
\end{equation}
\begin{equation}
S(0) := \{  (r,\varphi,  \dot{r} , \dot{\varphi} ) : f_0(r,\varphi,  \dot{r} , \dot{\varphi}   ) > 0  \}  . 
\end{equation}
 


We now give the proof of Theorem \ref{mainresultVM}.

\begin{proof}

We only prove the case $\beta=1$ here since the case $\beta > 1$ is proved in a similar way. 

Without loss of generality, we assume $0 <\eta < 1$ and $N >1$. 
Let $\epsilon \in (0,1)$ be a small positive constant to be determined. Let $k$, $l $ and $\alpha$ be positive constants satisfying $k< \frac{1}{3} l$, $\alpha < l-k$, $\alpha > 4k$, $l > 10 \alpha + 10k$. 



Let $H : [0, +\infty) \times [0, +\infty) \rightarrow [0, +\infty)$ be any function satisfying $ \int_{\mathbb{R}^2} H(|u_1|^2, |u_2|^2 ) du_1 du_2 = 1 $ with $\supp (H) \subset [0, 1) \times (0, 1)$, and rescale it for any $\epsilon \in (0, 1)$:
$$ H_\epsilon (|u_1|^2, |u_2|^2) = \frac{1}{\epsilon^{4k}}   H \big( \frac{|u_1|^2}{\epsilon^{4k}}, \frac{|u_2|^2}{\epsilon^{4k}} \big)   \ . $$
so that $ \int_{\mathbb{R}^2} H_\epsilon(|u_1|^2, |u_2|^2) du_1 du_2 = 1$ and $\supp (H_\epsilon) \subset [0, \epsilon^{2k}) \times (0, \epsilon^{2k})$. For any $\epsilon >0$, $x$, $v \in \mathbb{R}^2$, define 
$$ h_{\epsilon} (x, v) =  H_\epsilon \big(  | r- |\dot{r}| \epsilon^{2l-k} |^2 , |\dot{\varphi}- \epsilon^{-l}|^2 \big) \ . $$
We choose the cut-off function $\chi_{0, 1} \in C^\infty ((0, \infty); [0, 1] ) $ satisfying $|\chi_{0, 1} (x)| \leq 1$, $\chi_{0, 1} (x) = 1  $ for $|x| <  \frac{1}{2}  $ or, and $\chi_{0, 1} (x) =0$ for $|x| > 1$. Let $\chi_{m, n} (x) : = \chi_{0, 1} (\frac{1}{n} (x-m))$, and take $\chi (r) = \chi_{\frac{3}{4}, \frac{1}{4} } (r)$, then $\chi (r)$ is a smooth function supported on $[\frac{1}{2}  , 1 ]$, and $\chi (r ) = 1 $ for $r \in [\frac{5}{8} , \frac{7}{8}  ]$. 
For $r>0$, we define 
\begin{equation} \label{chiVPM}
\| \frac{d^k}{dr^k} ( \chi (r)  ) \|_{L^\infty} \leq c_k   . 
\end{equation}
In particular $c_0 =1$. Denote $d_k : = \sum_{j=0}^k c_j$.  

Let $\epsilon \in (0, 1)$ be a small positive constant to be determined later.
We choose the initial data on the density distributions of the ions and the electrons to be 
\begin{equation} \label{f0VPM}
f_0 (x, v) = \epsilon^{\alpha-k+2l} h_{\epsilon} (x, v)  \chi (|x|) \Gamma (\dot{r})  \ , 
\end{equation}
where $\Gamma (\dot{r}) = 0$ for $\dot{r} \geq 0$, and $\Gamma (\dot{r}) =1$ for $\dot{r} <0$. Notice that $\Gamma$ does not have any effect on the smoothness of $f_0 (x, v)$. Note that $\supp  f_0  $ is a region near the set $\{ r- |\dot{r}| \epsilon^{2l-k}=0, \dot{\varphi}=  \epsilon^{-l}  \}$. Moreover, the support of $f_0$ with respect to $x$ is $\{ \frac{1}{2} \leq |x| \leq 1 \}$. Let $M= \iint f (t, x, v) dv dx = \iint f_0 (x, v) dv dx = 2 \pi \int_0^{+\infty} \int_{\mathbb{R}^2} r f_0 (r, v) dv dr $. Using $dv = dv_1 dv_2 = r d\dot{r} d \dot{\varphi}$, we compute,   
\begin{equation}
\frac{\pi}{16} \epsilon^\alpha  \leq M  \leq 2 \pi \epsilon^\alpha . 
\end{equation}
For any $(r_0,\varphi_0,  \dot{r}_0 , \dot{\varphi}_0 )  \in S(0)$, we select $\epsilon$ small enough, so the following holds:
\begin{equation}
\frac{1}{2} \leq r_0 \leq 1, \ | r_0 - |\dot{r}_0| \epsilon^{2l-k} |^2  < \epsilon^{4k} , \ |\dot{\varphi}_0- \epsilon^{-l}|^2 < \epsilon^{4k}   . 
\end{equation}  
Hence we have
\begin{equation} \label{VPM-eq1}
\begin{split}
& \frac{1}{2} \leq r_0 \leq 1, \\
& \frac{1}{2} \epsilon^{k-2l} <  (r_0 - \epsilon^{2k}) \epsilon^{k-2l} < |\dot{r}_0| < (r_0 + \epsilon^{2k}) \epsilon^{k-2l} < 2 \epsilon^{k-2l}, \\ 
& \frac{1}{2} \epsilon^{-l} <  \epsilon^{-l} - \epsilon^{2k} < \dot{\varphi}_0 < \epsilon^{-l} + \epsilon^{2k} < 2 \epsilon^{-l} , \\
\end{split}
\end{equation}  
with $\dot{r}_0 < 0$. 

From \eqref{f0VPM}, we have
\begin{equation}
\rho (0, r) =  \frac{1}{2} \epsilon^\alpha  \chi (r)  . 
\end{equation}
We let $\epsilon$ be small enough such that
\begin{equation} \label{VPM-eq2}
 \| \rho (0) \|_{L^\infty} \leq 2 \pi \epsilon^\alpha  \leq  \frac{1}{10} \eta , 
\end{equation} 
Noticing that $\rho (t, 0)$ is only a function of $r$, we choose $\epsilon$ to be small enough such that
\begin{equation} \label{VPM-eq6}
\| \rho (0) \|_{C^1_x}  \leq   \| \frac{\partial }{\partial r} \rho (0) \|_{L^\infty} \leq \frac{1}{2} \epsilon^\alpha   \| \frac{\partial }{\partial r }  (  \chi (r) ) \|_{L^\infty} \leq \frac{1}{2} \epsilon^\alpha d_1  \leq \eta  . 
\end{equation}
For the electric field, we let $\epsilon$ be small enough such that for all $r \in [1/2, +\infty)$, 
\begin{equation}
\begin{split}
| E_r (0, r) | 
& = \frac{1}{r}  \int_0^{r}  \int_{\mathbb{R}^2} s f(0, s, v) ds dv    \\
& \leq \frac{1}{r}  \int_0^{+\infty}  \int_{\mathbb{R}^2} s f(0, s, v) ds dv \\
& \leq  2 \cdot \frac{\pi}{16} \epsilon^\alpha \cdot \frac{1}{2\pi} \\
& \leq \eta ,  \\
\end{split}
\end{equation}
and moreover, 
\begin{equation}
\begin{split}
| \partial_r E_r (0, r) | 
& = \big| \partial_r  \big( \frac{1}{r}  \int_0^{r}  \int_{\mathbb{R}^2} s f(0, s, v) ds dv  \big) \big|  \\
& \leq  | \frac{1}{r^2}  \int_0^{r}  \int_{\mathbb{R}^2} s f(0, s, v) ds dv  | +  |  \frac{1}{r}   \int_{\mathbb{R}^2} r f(0, r, v)  dv  |  \\
& \leq  | \frac{1}{r^2}  \int_0^{+\infty}  \int_{\mathbb{R}^2} s f(0, s, v) ds dv  | +  |     \int_{\mathbb{R}^2}  f(0, r, v)  dv  |  \\
& \leq  4 \cdot \frac{\pi}{16} \epsilon^\alpha \cdot \frac{1}{2\pi} + \| \rho (0) \|_{L^\infty} \\
& \leq \eta .  \\
\end{split}
\end{equation}
Notice that for all $r \in [0, 1/2)$ we have $| E_r (0, r) |  =0$, $| \partial_r E_r (0, r) |  =0$. 
Furthermore, we set the initial data (with $\epsilon >0 $ sufficiently small so that $\epsilon^{\alpha} < \eta$) such that 
\begin{equation}
| (E_\varphi , B) (0, r) | \leq   \frac{\epsilon^{\alpha}}{200 r }  \leq \frac{12 \epsilon^{\alpha-4k-l}}{r} \quad   \text{and} \quad  \| (E_\varphi , B) (0) \|_{C^1} \leq   \frac{\epsilon^{\alpha}}{200 } . 
\end{equation}
Also, we have
\begin{equation}   
| E_r (t, r) |  \leq \frac{M}{2 \pi r}  \leq \frac{\epsilon^\alpha}{r}  
\end{equation}
for all $t > 0$. Now, let $\epsilon$ be small enough such that the following conditions hold with $m = 100  \epsilon^{\alpha-4k-l}$. These conditions correspond to the assumptions in Lemma \ref{L:behaviorofcharacteristicsVPM}:
\begin{equation} \label{VPM-eq5}
\begin{split}  
& | E_r (t, r) |  \leq \frac{\epsilon^\alpha}{r}  \leq \frac{m}{3r}  ,   \\
& - \frac{1}{2}  mr_0 + r_0^2  \dot{\varphi}_0 > 0 ,  \\
& \mathcal{A}  := \dot{r}_0^2 + r_0^{-2} (  \frac{1}{2} m r_0 + r_0^2 \dot{\varphi}_0 )^2  + 2 m r_0^{-1} (   \frac{1}{2} m r_0+ r_0^2 \dot{\varphi}_0 ) - 2m \ln r_0 \\
& \quad  \in  [  r_0^2 ( \epsilon^{2k-4l} - 100 \epsilon^{-2l} ) ,  r_0^2  ( \epsilon^{2k-4l} +   100 \epsilon^{-2l}  ) ]   \subset [ \frac{1}{2} r_0^2 \epsilon^{2k-4l} , 2 r_0^2 \epsilon^{2k-4l} ]    ,  \\
& \mathcal{B} :=  2 m r_0 (   \frac{1}{2} m r_0+ r_0^2 \dot{\varphi}_0 ) + (  \frac{1}{2} m r_0 + r_0^2 \dot{\varphi}_0 )^2  + 2m \\
 & \quad \in  [ r_0^4 ( \epsilon^{-2l} - 100 \epsilon^{\alpha-4k-2l}  ) , r_0^4 ( \epsilon^{-2l} +   100 \epsilon^{\alpha-4k-2l} )  ]   \subset  [ \frac{1}{2} r_0^4 \epsilon^{-2l} , 2 r_0^4 \epsilon^{-2l} ]   ,  \\
& \mathcal{A} r_0^2 - \mathcal{B}  > 0 . \\ 
\end{split}
\end{equation}
Let $T := \epsilon^{-k+2l} - 300 \epsilon^{\alpha-5k+3l} - 300 \epsilon^{k+2l} - 300 \epsilon^{3l-2k}    < \epsilon^{-k+2l}  \ll  \frac{1}{100} r_0$.





\begin{flushleft}
\textbf{Claim.} For $t \in [0, T]$, $r \leq 10$, we have
\begin{equation}  \label{E:claim-1}
\begin{split}
| ( E_\varphi, B) (t, r) | 
& \leq \min\{ \frac{\epsilon^{\alpha}}{200r} ,  \| (E_\varphi , B) (0) \|_{L^\infty} \} + \frac{12\epsilon^{\alpha-4k-l}}{r}   \\
& \leq \min\{ \frac{\epsilon^{\alpha}}{200r} , \frac{\epsilon^{\alpha}}{200} \} +  \frac{12\epsilon^{\alpha-4k-l}}{r}   \leq \frac{24\epsilon^{\alpha-4k-l}}{r}   \leq \frac{m}{3r}  .  \\
\end{split}
\end{equation}
and 
\begin{equation}   \label{E:claim-2}
\inf_{ (r_0 ,\varphi_0,  \dot{r}_0 , \dot{\varphi}_0 )\in S(0)} r (t,  r_0 , \varphi_0 ,  \dot{r}_0 , \dot{\varphi}_0 )  \geq 10 t ,
\end{equation}
\begin{equation}   \label{E:claim-3}
\begin{split}
& \dot{r} < 0 . \\
\end{split}
\end{equation}


\vskip 0.4cm

\textit{Proof of Claim.} We carry out a bootstrapping argument. If \eqref{E:claim-1}, \eqref{E:claim-2} and \eqref{E:claim-3} hold on $[0, \tilde{T}]$ with $\tilde{T} < T$, we can show that on $[0, \tilde{T}]$, \eqref{E:claim-1} holds with $\frac{24\epsilon^{\alpha-4k-l}}{r} $ replaced by $\frac{20\epsilon^{\alpha-4k-l}}{r} $ on the right hand side using the argument in Lemma \ref{L:est-E-B} (that is, $| ( E_\varphi, B) (t, r) |  \leq \frac{20\epsilon^{\alpha-4k-l}}{r} $). Indeed, for $t \in [0, \tilde{T}]$, we have
\begin{equation}
\begin{split}
\rho (t, r) 
& = \int_{\supp f } f(s, r, \dot{r}, \dot{\varphi})  r d \dot{r} d \dot{\varphi} \leq  \int_{\supp f } \| f_0 \|_{L^\infty}  r d \dot{r} d \dot{\varphi}  \\
& \leq  \epsilon^{\alpha-k+2l} \epsilon^{-4k} \epsilon^{k-2l}  \epsilon^{-l} \epsilon^{k-l}   = \epsilon^{\alpha-3k-2l}   \\
\end{split}
\end{equation}
and
\begin{equation}  
\begin{split}
& \dot{r}(t) < 0 , \quad  \sup_{ (r_0 ,\varphi_0,  \dot{r}_0 , \dot{\varphi}_0 )\in S(0)}  |\dot{\varphi}_0|  \leq  2 \epsilon^{-l}  ,  \quad  \sup_{ (r_0 ,\varphi_0,  \dot{r}_0 , \dot{\varphi}_0 )\in S(0)}  |r_0|  \leq  1  .  \\ 
\end{split}
\end{equation}
Since $\dot{r}(t) < 0$ on $t \in [0, \tilde{T}]$, we have $r \leq 1 < 10$ on $t \in [0, \tilde{T}]$ for all particle trajectories. Integrating the second line of \eqref{particletrajectoryODEVM} over $[0, t]$ and using $ |(E_r, E_\varphi, B) (t)| \leq \frac{m}{3r} $, we learn that for all $t \in [0, \tilde{T}]$,
$$  -mt+ \frac{1}{3} mr(t) - \frac{1}{3} m r_0+ r_0^2 \dot{\varphi}_0   \leq r(t)^2 \dot{\varphi} (t) \leq mt -\frac{1}{3} mr(t) + \frac{1}{3} m r_0+ r_0^2 \dot{\varphi}_0  ,  $$
from which we deduce $ R^2 \dot{\varphi}  \leq \frac{4}{3} r_0^2 \dot{\varphi}_0 $ using the smallness of $\epsilon$. Repeat the proof of Lemma \ref{L:est-E-B}, we can show that the result in Lemma \ref{L:est-E-B} holds on $[0, \tilde{T}]$ with $\frac{6KK_1 T}{R}$ replaced by $\frac{4KK_1 T}{R}$ \footnote{In the proof of Lemma \ref{L:est-E-B}, replace $6KK_1 T$ by $4KK_1 T$ in the hypothesis "for some $t \in [0, T]$ $\sup_{r >0} |rB(t, r)| \geq r \| ( E_{\varphi, 0}, B_0) \|_{L^\infty}  + 6KK_1T $",  in the definition $t_0 := \inf \{ t>0 : \sup_{r >0} |rB(t, r)| \geq  r \| ( E_{\varphi, 0}, B_0) \|_{L^\infty}  + 6KK_1 T \}$, and in the left hand side of \eqref{E:est-E-B-eq5}. Then continue with the proof to show an adapted version of Lemma \ref{L:est-E-B} with $\frac{6KK_1 T}{R}$ replaced by $\frac{4KK_1 T}{R}$ in \eqref{E:est-E-B-eq4}.}, we have 
\begin{equation}   
\begin{split}
| ( E_\varphi, B) (t, r) | 
&  \leq  \| (E_\varphi , B) (0) \|_{L^\infty}  + \frac{4 \epsilon^{\alpha-3k-2l} \cdot 2 \epsilon^{-l}  \cdot \epsilon^{-k+2l} }{r}  \\
& \leq \frac{\epsilon^{\alpha}}{200}  +  \frac{8 \epsilon^{\alpha-4k-l}}{r} \leq   \frac{20 \epsilon^{\alpha-4k-l}}{r}  \\ 
\end{split}
\end{equation}
for $t \in [0, \tilde{T}]$, $r \leq 10$.

Now, with \eqref{VPM-eq5}, we apply the ODE analysis in Lemma \ref{L:behaviorofcharacteristicsVPM}. We deduce that for all $(r_0 ,\varphi_0,  \dot{r}_0 , \dot{\varphi}_0 )\in S(0)$ and $t \in [0, \tilde{T}]$, 
$$ r(t)^2    \leq (r_0 - \sqrt{\mathcal{A}  - \mathcal{B} r_0^{-2}} t )^2 + \mathcal{B} r_0^{-2} t^2 .   $$
We conclude that on $[0, \tilde{T}]$ \eqref{E:claim-2} and \eqref{E:claim-3} hold with $10 t$ replaced by $100 t$ in \eqref{E:claim-2} and $\dot{r} < 0$ on $[0, \tilde{T}]$ (provided that $\epsilon$ is sufficiently small). Therefore the interval $[0, \tilde{T}]$ on which \eqref{E:claim-1}, \eqref{E:claim-2} and \eqref{E:claim-3} hold can be prolonged.


\end{flushleft}


By Lemma \ref{L:behaviorofcharacteristicsVPM}, we have the following estimate from the argument above for sufficiently small $\epsilon$:
\begin{equation}
\begin{split}
r(T)^2  
&  \leq (r_0 - \sqrt{\mathcal{A}  - \mathcal{B} r_0^{-2}} T )^2 + \mathcal{B} r_0^{-2} T^2  \\
& \leq  [ r_0 - \sqrt{ r_0^2  ( \epsilon^{2k-4l} -  100 \epsilon^{-2l}  ) - r_0^4 ( \epsilon^{-2l} + 100 \epsilon^{\alpha-4k-2l}  )  r_0^{-2}}  \\
& \quad \cdot (\epsilon^{-k+2l} - 300 \epsilon^{\alpha-5k+3l} - 300 \epsilon^{k+2l} - 300 \epsilon^{3l-2k} ) ]^2  + 2 r_0^2 \epsilon^{-2l-2k+4l}    \\
& \leq  10000 r_0^2 \epsilon^{2l-2k}  + 100000 r_0^2 \epsilon^{2\alpha-8k+2l} + 2 r_0^2 \epsilon^{2l-2k}   \\
& \leq  40000 r_0^2 \epsilon^{2l-2k} .  \\
\end{split}
\end{equation}
Therefore 
\begin{equation}
r(T) \leq 200 r_0^2 \epsilon^{l-k} . 
\end{equation}

Now we have
\begin{equation}
\begin{split}
R_+ = R_+ (T)
&  := \sup_{ (r_0 ,\varphi_0,  \dot{r}_0 , \dot{\varphi}_0 )\in S(0)} r (T,  r_0 , \varphi_0 ,  \dot{r}_0 , \dot{\varphi}_0 )  \in [ \frac{1}{2} r_0 \epsilon^{l-k} , 200 r_0 \epsilon^{l-k} ]  \\
&  \subset  [ \frac{1}{4}  \epsilon^{l-k} , 200  \epsilon^{l-k} ] .   \\
\end{split}
\end{equation}
By the conservation of mass, we have
\begin{equation}
\| \rho (T) \|_{L^\infty_{|x| \leq R_+}} \geq  \frac{M}{\pi R_+^2} \geq \frac{\pi}{16} \frac{ \epsilon^\alpha}{ \pi  (200 \epsilon^{l-k})^2} = \frac{1}{640000} \epsilon^{\alpha-2l+2k}  . 
\end{equation}
By \eqref{E:rep-E-B-eq4}, 
\begin{equation}
\begin{split}
| E_r (T, R_+) | 
& = \frac{1}{R_+}  \int_0^{R_+}  \int_{\mathbb{R}^2} s f(T, s, v) ds dv    \\
& = \frac{1}{R_+}  \int_0^{+\infty}  \int_{\mathbb{R}^2} s f(T, s, v) ds dv \\
& \geq  \frac{1}{200 \epsilon^{l-k} }  \frac{\pi}{16}  \epsilon^\alpha \frac{1}{2 \pi}  \\
& =  \frac{1}{6400} \epsilon^{\alpha-l+k} , \\
\end{split}
\end{equation}
Selecting $\epsilon$ small enough, we have
\begin{equation}
\| \rho (T) \|_{L^\infty_{|x| \leq R_+}} \geq   \frac{1}{640000} \epsilon^{\alpha-2l+2k} > N  ,  \quad 
\| E_r (T) \|_{L^\infty_{|x| \leq R_+}} \geq  \frac{1}{6400} \epsilon^{\alpha-l+k} > N 
\end{equation}
with $R_+ < \epsilon_0$.

This completes the proof of Theorem \ref{mainresultVM}.

\end{proof}


\begin{thebibliography}{00}
\bibitem{BD1} C. Bardos and P. Degond, \emph{Global existence for the Vlasov-Poisson equation in 3 space variables with small initial data}, In Annales de l'Institut Henri Poincar\'e C, Analyse non lin\'eaire, vol. 2, no. 2, pp. 101--118
\bibitem{BCP2} J. Ben-Artzi, S. Calogero and S. Pankavich, \emph{Arbitrarily large solutions of the Vlasov-Poisson system}, SIAM Journal on Mathematical Analysis 50, no. 4 (2018): 4311--4326
\bibitem{BCP1} J. Ben-Artzi, S. Calogero and S. Pankavich, \emph{Concentrating solutions of the relativistic Vlasov-Maxwell system}, Communications in Mathematical Sciences (17), no. 2 (2019): 377--392
\bibitem{Friedberg1} J. P. Friedberg, \emph{Ideal Magnetohydrodynamics}, Plenum Press, New York, 1987
\bibitem{Nicholson1} D. R. Nicholson, \emph{Introduction to plasma theory}, Wiley, New York, 1983
\bibitem{G1} R. Glassey, \emph{The Cauchy problem in kinetic theory}, SIAM, 1996
\bibitem{GPS1} R. Glassey, S. Pankavich and J. Schaeffer, \emph{Decay in time for a one-dimensional, two component plasma}, Math. Methods Appl. Sci. (2008) 31:2115--2132
\bibitem{GPS2} R. Glassey, S. Pankavich and J. Schaeffer, \emph{On long-time behavior of monocharged and neutral plasma in one and one-half dimensions}, Kinetic and Related Models (2009) 2: 465--488
\bibitem{GPS3} R. Glassey, S. Pankavich and J. Schaeffer, \emph{Large time behavior of the relativistic Vlasov-Maxwell system in low space dimension}, Differential and Integral Equations (2010) 23: 61--77
\bibitem{GPS4} R. Glassey, S. Pankavich and J. Schaeffer, \emph{Time decay for solutions to the one-dimensional equations of plasma dynamics}, Quarterly of Applied Mathematics (2010) 68: 135--141
\bibitem{GS1} R. Glassey and J. Schaeffer, \emph{On symmetric solutions of the relativistic Vlasov-Poisson system}, Comm. Math. Phys. (1985) 101(4): 459--473
\bibitem{GS2} R. Glassey and J. Schaeffer, \emph{The Relativistic Vlasov-Maxwell System in Two Space Dimensions: Part I}, Arch Rational Mech Anal 141, 331--354 (1998)
\bibitem{GS3} R. Glassey and J. Schaeffer, \emph{The Relativistic Vlasov-Maxwell System in Two Space Dimensions: Part II}, Arch Rational Mech Anal 141, 355--374 (1998)
\bibitem{GS4} R. Glassey and J. Schaeffer, \emph{On global symmetric solutions to the relativistic Vlasov-Poisson equation in three space dimensions}, Mathematical methods in the applied sciences 24, no. 3 (2001): 143--157
\bibitem{GStrauss1} R. Glassey and W. A. Strauss, \emph{Singularity formation in a collisionless plasma could occur only at high velocities}, Archive for rational mechanics and analysis 92 (1986): 59--90
\bibitem{H1} E. Horst, \emph{Symmetric plasmas and their decay}, Comm. Math. Phys. (1990) 126:613--633
\bibitem{IR1} R. Illner and G. Rein, \emph{Time decay of the solutions of the Vlasov-Poisson system in the plasma physical case}, Math. Methods Appl. Sci. (1996) 19: 1409--1413
\bibitem{IPWW1} A. D. Ionescu, B. Pausader, X. Wang and K. Widmayer, \emph{On the asymptotic behavior of solutions to the Vlasov-Poisson system}, International Mathematics Research Notices 2022, no. 12 (2022): 8865--8889
\bibitem{JSW1} J. W. Jang, R. M. Strain, and T. K. Wong, \emph{Magnetic confinement for the 2D axisymmetric relativistic Vlasov-Maxwell system in an annulus}, Kinetic \& Related Models, 2022, 15 (4) : 569--604
\bibitem{KS1} S. Klainerman and G. Staffilani, \emph{A new approach to study the Vlasov-Maxwell system}, Commun. Pure Appl. Anal 1, no. 1 (2002): 103--125
\bibitem{LP1} P. L. Lions and B. Perthame, \emph{Propogation of moments and regularity for the three dimensional Vlasov-Poisson system}, Invent. Math. (1991) 105:415--430
\bibitem{LS1} J. Luk and R. Strain, \emph{A new continuation criterion for the relativistic Vlasov-Maxwell system}, Communications in mathematical physics 331, no. 3 (2014): 1005--1027
\bibitem{Pankavich1} S. Pankavich, \emph{Exact Large Time Behavior of Spherically Symmetric Plasmas}, SIAM Journal on Mathematical Analysis 53, no. 4 (2021): 4474--4512
\bibitem{P1} K. Pfaffelmoser, \emph{Global classical solution of the Vlasov-Poisson system in three dimensions for general initial data}, J. Diff. Eq. (1992) 95(2):281--303 
\bibitem{RT1} G. Rein and L. Taegert, \emph{Gravitational collapse and the Vlasov-Poisson system}, Annales Henri Poincar\'e (2016) 17(6): 1415--1427
\bibitem{W1} X. Wang, \emph{Propagation of regularity and long time behavior of the $3D$ massive relativistic transport equation II: Vlasov-Maxwell system}, Communications in Mathematical Physics 389, no. 2 (2022): 715--812
\bibitem{Z1} K. Z. Zhang, \emph{Focusing solutions of the Vlasov-Poisson system}, Kinetic \& Related Models 12, no. 6 (2019): 1313--1327
\end{thebibliography}
\end{document}